\definecolor{verylight}{gray}{0.97}
\definecolor{light}{gray}{0.9}
\definecolor{medium}{gray}{0.85}
\definecolor{dark}{gray}{0.6}
\def\NZQ{\mathbb}               
\def\ZZ{{\NZQ Z}}
\def\G{{\mathcal G}}
\def\HS{\textup{HS}}
\def\pd{\textup{proj}\phantom{.}\!\textup{dim}}
\def\opn#1#2{\def#1{\operatorname{#2}}} 
\opn\chara{char} \opn\length{\ell} \opn\pd{pd} \opn\rk{rk}
\opn\projdim{proj\,dim} \opn\injdim{inj\,dim} \opn\rank{rank}
\opn\depth{depth} \opn\grade{grade} \opn\height{height}
\opn\embdim{emb\,dim} \opn\codim{codim}
\opn\Tr{Tr} \opn\bigrank{big\,rank}
\opn\superheight{superheight}\opn\lcm{lcm}
\opn\trdeg{tr\,deg}
\opn\reg{reg} \opn\lreg{lreg} \opn\ini{in} \opn\lpd{lpd}
\opn\size{size} \opn\sdepth{sdepth}
\opn\link{link}\opn\fdepth{fdepth}\opn\lex{lex}
\opn\tr{tr}
\opn\type{type}
\opn\gap{gap}
\opn\diam{diam}
\opn\Mod{Mod}
\opn\div{div} \opn\Div{Div} \opn\cl{cl} \opn\Cl{Cl}
\opn\Spec{Spec} \opn\Supp{Supp} \opn\supp{supp} \opn\Sing{Sing}
\opn\Ass{Ass} \opn\Min{Min}\opn\Mon{Mon}
\opn\Ann{Ann} \opn\Rad{Rad} \opn\Soc{Soc}
\opn\Im{Im} \opn\Ker{Ker} \opn\Coker{Coker} \opn\Am{Am}
\opn\Hom{Hom} \opn\Tor{Tor} \opn\Ext{Ext} \opn\End{End}
\opn\Aut{Aut} \opn\id{id}
\opn\nat{nat}
\opn\pff{pf}
\opn\Pf{Pf} \opn\GL{GL} \opn\SL{SL} \opn\mod{mod} \opn\ord{ord}
\opn\Gin{Gin} \opn\Hilb{Hilb}\opn\sort{sort}
\opn\PF{PF}\opn\Ap{Ap}
\opn\dist{dist}
\opn\aff{aff}
\opn\relint{relint} \opn\st{st}
\opn\lk{lk} \opn\cn{cn} \opn\core{core} \opn\vol{vol}  \opn\inp{inp} \opn\nilpot{nilpot}
\opn\link{link} \opn\star{star}\opn\lex{lex}\opn\set{set}
\opn\width{wd}
\opn\Fr{F}
\opn\QF{QF}
\opn\G{G}
\opn\type{type}\opn\res{res}
\opn\conv{conv}
\opn\sr{sr}
\opn\gr{gr}
\def\pot#1#2{#1[\kern-0.28ex[#2]\kern-0.28ex]}
\opn\dirlim{\underrightarrow{\lim}}
\opn\inivlim{\underleftarrow{\lim}}
\def\Implies{\ifmmode\Longrightarrow \else
	\unskip${}\Longrightarrow{}$\ignorespaces\fi}
\def\implies{\ifmmode\Rightarrow \else
	\unskip${}\Rightarrow{}$\ignorespaces\fi}
\def\iff{\ifmmode\Longleftrightarrow \else
	\unskip${}\Longleftrightarrow{}$\ignorespaces\fi}
\newtheorem{Theorem}{Theorem}[section]
\newtheorem{Corollary}[Theorem]{Corollary}
\newtheorem{Proposition}[Theorem]{Proposition}
\newtheorem{Example}[Theorem]{Example}
\newtheorem{Examples}[Theorem]{Examples}
\newtheorem{Conjecture}[Theorem]{Conjecture}
\newtheorem{Question}[Theorem]{Question}
\let\epsilon\varepsilon
\let\kappa=\varkappa
\def\qed{\ifhmode\textqed\fi
	\ifmmode\ifinner\hfill\quad\qedsymbol\else\dispqed\fi\fi}
\def\textqed{\unskip\nobreak\penalty50
	\hskip2em\hbox{}\nobreak\hfill\qedsymbol
	\parfillskip=0pt \finalhyphendemerits=0}
\def\dispqed{\rlap{\qquad\qedsymbol}}
\opn\dis{dis}
\def\pnt{{\raise0.5mm\hbox{\large\bf.}}}
\opn\Lex{Lex}
\opn\Shad{Shad}
\begin{document}

	\title{Shellability of Componentwise Discrete Polymatroids}
	\author{Antonino Ficarra}
	
	\address{Antonino Ficarra, Department of mathematics and computer sciences, physics and earth sciences, University of Messina, Viale Ferdinando Stagno d'Alcontres 31, 98166 Messina, Italy}
	\email{antficarra@unime.it}
	
	\thanks{.
	}
	
	\subjclass[2020]{Primary 13F20; Secondary 13H10}
	
	\keywords{monomial ideals, linear quotients, polymatroidal ideals.}
	
	\maketitle
	
	\begin{abstract}
		In the present paper, motivated by a conjecture of Jahan and Zheng, we prove that componentwise polymatroidal ideals have linear quotients. This solves positively a conjecture of Bandari and Herzog.
	\end{abstract}

	\section{Componentwise linear quotients}
	
	Let $S=K[x_1,\dots,x_n]$ be the polynomial ring with coefficients over a field $K$, and let $I\subset S$ be a monomial ideal. Let $\mathcal{G}(I)$ be the unique minimal set of monomial generators of $I$. We say that $I$ has \textit{linear quotients} if there exists an order $u_1,\dots,u_m$ of $\mathcal{G}(I)$ such that $(u_1,\dots,u_{j-1}):u_j$ is generated by variables for $j=2,\dots,m$.\smallskip
	
	For $j\ge0$, let $I_{\langle j\rangle}$ be the monomial ideal generated by the monomials of degree $j$ belonging to $I$. We say that $I$ has \textit{componentwise linear quotients} if $I_{\langle j\rangle}$ has linear quotients for all $j$. It is known that ideals with linear quotients have componentwise linear quotients \cite[Corollary 2.8]{JZ10}. The converse is an open question \cite{JZ10}:
	\begin{Conjecture}\label{Conj:JZ}
		\textup{(Jahan--Zheng)} Let $I$ be a monomial with componentwise linear quotients. Then $I$ has linear quotients.
	\end{Conjecture}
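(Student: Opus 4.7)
The plan is to construct a linear-quotient ordering on $\mathcal{G}(I)$ by stacking the linear-quotient orderings provided by the hypothesis on each $I_{\langle d\rangle}$. Concretely, I would list the minimal generators of $I$ in increasing order of total degree, and within a single degree $d$ use the restriction to $\mathcal{G}(I)_d := \mathcal{G}(I) \cap I_{\langle d\rangle}$ of a linear-quotient ordering of $I_{\langle d\rangle}$, chosen so that all lifted multiples of strictly smaller-degree generators of $I$ appear before any element of $\mathcal{G}(I)_d$ in the ambient order on $I_{\langle d\rangle}$. The generators of $I_{\langle d\rangle}$ that are not in $\mathcal{G}(I)_d$ are, by construction, degree-$d$ multiples of lower-degree generators of $I$, so including them merely as "place-holders" does not change which elements appear in the partial ideal under consideration.

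Next I would analyze the colon $L=(u_1,\dots,u_{j-1}):u_j$ for $u_j\in\mathcal{G}(I)_d$, splitting $L=L_d+L_{<d}$ according to whether the contributing $u_i$ has degree $d$ or strictly smaller. The piece $L_d$ is immediately controlled: under the chosen ordering it coincides with a subideal of the colon of $u_j$ against all of its predecessors inside $I_{\langle d\rangle}$, which is generated by variables by hypothesis. For $L_{<d}$, the idea is that if $u_i$ has degree $d'<d$ and contributes $u_i/\gcd(u_i,u_j)$, then a degree-$d$ lift $\widetilde{u}_i=u_i\cdot w$ (for an appropriate degree-$(d-d')$ monomial $w$) already sits among the predecessors of $u_j$ in $I_{\langle d\rangle}$, and the variable witness $\widetilde{u}_i/\gcd(\widetilde{u}_i,u_j)$ coming from the linear-quotient property of $I_{\langle d\rangle}$ should divide $u_i/\gcd(u_i,u_j)$, so that $L_{<d}\subseteq L_d\cdot S$ and the whole colon $L$ is generated by variables.

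The main obstacle lies in making the cross-degree compatibility work simultaneously at every step. The hypothesis provides a linear-quotient order on each $I_{\langle d\rangle}$ in isolation, but there is no a priori guarantee that one can choose these orders so that (i) lifts of lower-degree generators of $I$ precede $\mathcal{G}(I)_d$ within $I_{\langle d\rangle}$, (ii) the lifting monomial $w$ above can always be selected to yield the correct variable witness, and (iii) the choices at different degrees remain mutually consistent. One cannot in general reorder the generators in a linear-quotient order without destroying the property, and orchestrating the lifts to line up is precisely the combinatorial problem that has kept the Jahan--Zheng conjecture open outside of special families. The attack I sketched should succeed whenever the linear-quotient orderings on the $I_{\langle d\rangle}$ admit a canonical "first lift then add" refinement, such as the one furnished by the exchange property for componentwise polymatroidal ideals, which is the main result of this paper.
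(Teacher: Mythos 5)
The statement you were given is Conjecture \ref{Conj:JZ}, which the paper explicitly poses as an open problem and does not prove; there is no ``paper's own proof'' to compare against. The paper's main result is the special case for componentwise \emph{polymatroidal} ideals (Theorem \ref{Thm:CMLQ}, i.e.\ the Bandari--Herzog conjecture). Your submission correctly recognizes this: you sketch a natural degree-stacking attack and then candidly identify why it fails in general, namely that the linear-quotient orders on the individual components $I_{\langle d\rangle}$ need not cohere across degrees, and that one cannot freely reorder a linear-quotient order. That assessment is sound, and there is no error in the sense of a false claim, since you are not asserting a proof.

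However, your closing sentence --- that the degree-stacking attack ``should succeed'' for componentwise polymatroidal ideals and ``is the main result of this paper'' --- mischaracterizes the paper's method. The proof of Theorem \ref{Thm:CMLQ} is \emph{not} a degree-by-degree stacking argument. Instead it picks a variable $x_1$ dividing a generator of least degree, decomposes $I = x_1 I_1 + I_2$ (with $I_2$ the part of $\mathcal{G}(I)$ not divisible by $x_1$), establishes that $I_2 \subseteq I_1$ and that $x_1I_1$ and $I_2$ are again componentwise polymatroidal (using Theorem \ref{Thm:BQ} and Proposition \ref{Prop:BQ}), and then inducts on the number of variables and generators. The resulting linear-quotient order lists all of $\mathcal{G}(x_1I_1)$ first and all of $\mathcal{G}(I_2)$ second, and need not be increasing in degree: the paper's own Example 3.2 yields the order $x_1^2,\ x_1x_3,\ x_1x_2x_4,\ x_3^2,\ x_2x_3x_4,\ x_2^2x_4^2$, where the degree-$3$ monomial $x_1x_2x_4$ precedes the degree-$2$ monomial $x_3^2$. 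So the cross-degree coherence you flagged as the central obstruction is not what the paper resolves; the paper sidesteps it entirely by a different inductive decomposition. If you want to pursue the degree-stacking route for the polymatroidal case, you would need a separate argument that compatible component orderings exist --- the exchange and dual-exchange properties are plausible tools, but the paper does not establish such a statement.
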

	
	The above conjecture is widely open. See \cite{Shen} for some partial results.
	
	\section{Componentwise Polymatroidal Ideals}
	
	A monomial ideal $I$ is called \textit{polymatroidal} if the set of the exponent vectors of the minimal monomial generators of $I$ is the set of bases of a discrete polymatroid \cite{JT}. Polymatroidal ideals have linear quotients. A monomial ideal $I$ is \textit{componentwise polymatroidal} if the component $I_{\langle j\rangle}$ is polymatroidal for all $j$. Hence, componentwise polymatroidal ideals are ideals with componentwise linear quotients. Therefore, a particular case of Conjecture \ref{Conj:JZ} is:
	\begin{Conjecture}\label{Con:BH}
		\textup{(Bandari--Herzog)} Let $I$ be a componentwise polymatroidal ideal. Then $I$ has linear quotients.
	\end{Conjecture}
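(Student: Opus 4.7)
My approach is to construct an explicit linear-quotients enumeration of $\mathcal{G}(I)$ and verify the condition by a degree-by-degree use of the polymatroidal exchange. Order $\mathcal{G}(I)=u_1,\dots,u_m$ by increasing degree, breaking ties inside each degree $k$ by a fixed linear-quotients order of the polymatroidal ideal $I_{\langle k\rangle}$---say the decreasing lex order, under which polymatroidal ideals are known to have linear quotients. For $i<j$, set $k=\deg u_j$; the task is to find a variable $x_s$ dividing $u_i/\gcd(u_i,u_j)$ and an index $\ell<j$ with $u_\ell\mid x_s u_j$.

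The key construction is a lift of $u_i$ to a monomial $u_i'\in I_{\langle k\rangle}$ of degree $k$. If $\deg u_i=k$, take $u_i'=u_i$; otherwise take $u_i'=u_iv$, where $v$ is a divisor of $u_j$ of degree $k-\deg u_i$ with exponents capped by $\deg_s v\le\max(0,\deg_s u_j-\deg_s u_i)$. Such a $v$ exists because $\sum_s\max(0,\deg_s u_j-\deg_s u_i)=k-\deg\gcd(u_i,u_j)\ge k-\deg u_i$, and the cap forces the equivalence $\deg_s u_i'>\deg_s u_j\Leftrightarrow\deg_s u_i>\deg_s u_j$. Hence any variable witnessing the exchange of $(u_i',u_j)$ will automatically divide $u_i/\gcd(u_i,u_j)$.

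Now apply the polymatroidal exchange of $I_{\langle k\rangle}$ to $(u_i',u_j)$ at the smallest index $s$ with $\deg_s u_i'>\deg_s u_j$: this yields an index $t$ with $\deg_t u_i'<\deg_t u_j$ and a monomial $w=x_s u_j/x_t\in I_{\langle k\rangle}$. If $w\notin\mathcal{G}(I)$, then $w$ is properly divisible by some $u_\ell\in\mathcal{G}(I)$ of degree strictly less than $k$; this $u_\ell$ precedes $u_j$ in the enumeration, and $u_\ell\mid w\mid x_s u_j$, as required. If instead $w\in\mathcal{G}(I)$, then $w$ lies in the degree-$k$ block, and provided $t>s$ the exchange has produced a lex-larger $w$, which therefore precedes $u_j$ in the same-degree block.

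The principal obstacle is the case $\deg u_i<\deg u_j$ in which the lift $u_i'$ ends up lex-smaller than $u_j$: the minimal $s$ may then force an exchange with $t<s$, so $w$ is lex-smaller than $u_j$, and the argument succeeds only if $w\notin\mathcal{G}(I)$. Ruling out $w\in\mathcal{G}(I)$ in this situation requires the full componentwise hypothesis: one must combine polymatroidality of $I_{\langle k\rangle}$ with that of $I_{\langle k+1\rangle}$ applied to the monomial $x_s u_j\in\mathcal{G}(I_{\langle k+1\rangle})$, or else modify the choice of $v$ in the lift so as to restore $u_i'>_{\textup{lex}} u_j$ and thereby force $t>s$. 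Navigating this interplay across adjacent polymatroidal components is the key combinatorial step where the strength of componentwise polymatroidality must be fully exploited.
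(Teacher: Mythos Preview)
Your plan has a genuine gap at exactly the point you flag as the ``principal obstacle,'' and neither of your suggested fixes closes it. The second fix (choose $v$ so that $u_i'>_{\textup{lex}}u_j$) can simply fail: take $u_i=x_3^2$ and $u_j=x_1^2x_2^2$. Any admissible $v$ has degree $2$ and is capped by $\deg_1 v\le 2$, $\deg_2 v\le 2$, $\deg_3 v=0$; to force $u_iv>_{\textup{lex}}u_j$ you would need $\deg_1 v=2$ and $\deg_2 v\ge 2$, which is degree $\ge 4$. So $u_i'<_{\textup{lex}}u_j$ for every lift, the smallest $s$ with $\deg_s u_i'>\deg_s u_j$ is $s=3$, and the exchange may return $t\in\{1,2\}<s$, giving $w=x_3u_j/x_t$ lex-smaller than $u_j$. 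If such a $w$ happens to lie in $\mathcal{G}(I)$ you are stuck, and nothing in your outline rules this out. The first fix (pass to $I_{\langle k+1\rangle}$ via $x_su_j$) is only a gesture: polymatroidality of $I_{\langle k+1\rangle}$ gives you another exchange, but you have not explained why the resulting monomial must be divisible by a generator that precedes $u_j$ in your order; the same lex-direction problem recurs one degree up. In short, the cross-degree comparison is the heart of the matter, and your plan does not supply the missing combinatorial argument.

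The paper's proof avoids this difficulty entirely by a different organization. Instead of a global degree-then-lex order, it chooses a variable $x_1$ dividing some minimum-degree generator and writes $I=x_1I_1+I_2$ with $\mathcal{G}(x_1I_1)=\{u\in\mathcal{G}(I):x_1\mid u\}$ and $\mathcal{G}(I_2)=\{u\in\mathcal{G}(I):x_1\nmid u\}$. The crucial lemma, proved via the \emph{dual} exchange property (your plan only uses the direct exchange), is $I_2\subseteq I_1$. One then checks that $x_1I_1$ is componentwise polymatroidal in $S$ and $I_2$ is componentwise polymatroidal in $K[x_2,\dots,x_n]$, so by induction on the number of variables (and on $|\mathcal{G}(I)|$) both have linear quotients. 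Concatenating the two orders works because for $v\in\mathcal{G}(I_2)$ one has $(x_1I_1):v=(x_1)$, which follows immediately from $v\in I_2\subseteq I_1$ and $x_1\nmid v$. Thus the paper never compares a low-degree generator against a lex-smaller high-degree one; the single variable $x_1$ absorbs all cross-block colons. If you want to repair your approach, the missing ingredient is precisely an analogue of the containment $I_2\subseteq I_1$, and for that you will need the dual exchange property rather than the direct one.
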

	
	This conjecture was firstly considered in \cite{BH2013} and proved for ideals of componentwise Veronese type. Recently, Bandari and Qureshi \cite{BQ23} proved it in the two variables case and for componentwise polymatroidal ideals with strong exchange property.\medskip
	
	We are going to prove Conjecture \ref{Con:BH} in full generality.\smallskip
	
	For this aim, we recall some results from \cite{BQ23}. For a monomial $u=x_1^{a_1}\cdots x_n^{a_n}\in S$, we denote its \textit{degree} by $\deg(u)=a_1+\dots+a_n$. Whereas, the \textit{$x_i$-degree} of $u$ is the integer $\deg_{x_i}(u)=a_i=\max\{j\ge0:x_i^j\ \text{divides}\ u\}$.
	\begin{Theorem}\label{Thm:BQ}
		\textup{\cite[Proposition 1.2]{BQ23}} Let $I\subset S$ be a monomial ideal. Then, the following conditions are equivalent.
		\begin{enumerate}
			\item[\textup{(i)}] $I$ is a componentwise polymatroidal ideal.
			\item[\textup{(ii)}] For all $u,v\in I$ with $\deg(u)\le\deg(v)$ and with $u$ not diving $v$, and all $i$ such that $\deg_{x_i}(v)>\deg_{x_i}(u)$ there exists an integer $j$ with $\deg_{x_j}(v)<\deg_{x_j}(u)$ and such that $x_j(v/x_i)\in I$.
		\end{enumerate}
	\end{Theorem}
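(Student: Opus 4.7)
The plan is to establish the two implications separately, both pivoting on the symmetric exchange property that characterises a polymatroidal ideal among those generated in a single degree: for any two minimal generators $u,v\in\mathcal{G}(J)$ and any $i$ with $\deg_{x_i}(u)>\deg_{x_i}(v)$, there exists $j$ with $\deg_{x_j}(u)<\deg_{x_j}(v)$ such that $x_j(u/x_i)\in\mathcal{G}(J)$.

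For (ii)$\Rightarrow$(i), I would fix $d$ and argue that $I_{\langle d\rangle}$ is polymatroidal. Every monomial of degree $d$ belonging to $I$ is a minimal generator of $I_{\langle d\rangle}$, since distinct monomials of the same degree cannot divide each other. Given two such generators $u\ne v$ and an index $i$ with $\deg_{x_i}(u)>\deg_{x_i}(v)$, applying hypothesis (ii) to the pair $(v,u)$ (in that order) at the same $i$ is legal because $\deg(v)=\deg(u)$ and $v\nmid u$; it outputs some $j$ with $\deg_{x_j}(u)<\deg_{x_j}(v)$ and $x_j(u/x_i)\in I$. This monomial has degree $d$, so it lies in $I_{\langle d\rangle}$, which is exactly the symmetric exchange property for $I_{\langle d\rangle}$.

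For (i)$\Rightarrow$(ii), the key move is to reduce the mixed-degree statement to the single-degree exchange by \emph{inflating} $u$ to the degree of $v$. Fix $u,v,i$ as in the hypothesis. Since $u\nmid v$, there exists $k$ with $\deg_{x_k}(u)>\deg_{x_k}(v)$, and necessarily $k\ne i$. Set $d=\deg(v)$ and $w=u\cdot x_k^{d-\deg(u)}\in I_{\langle d\rangle}$. By construction $\deg_{x_\ell}(w)=\deg_{x_\ell}(u)$ for all $\ell\ne k$, while $\deg_{x_k}(w)>\deg_{x_k}(v)$; in particular $\deg_{x_i}(v)>\deg_{x_i}(w)$. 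Then the symmetric exchange property for the polymatroidal ideal $I_{\langle d\rangle}$, applied to the pair $(v,w)$ at $i$, produces $j$ with $\deg_{x_j}(v)<\deg_{x_j}(w)$ and $x_j(v/x_i)\in I_{\langle d\rangle}\subset I$.

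The only step that requires a sanity check is transferring the inequality $\deg_{x_j}(v)<\deg_{x_j}(w)$ back from $w$ to $u$. If $j\ne k$ then $\deg_{x_j}(w)=\deg_{x_j}(u)$, so the inequality passes to $u$ verbatim; if $j=k$ the $x_k$-exponent of $w$ was artificially inflated, but this index was chosen precisely so that $\deg_{x_k}(u)>\deg_{x_k}(v)$ already holds. Either way $j$ satisfies the conclusion of (ii). The main (and essentially only) technical obstacle is recognising the correct inflation $w=u\cdot x_k^{d-\deg(u)}$, using the "witness" variable $x_k$ supplied by the failure $u\nmid v$; once $w$ is in hand, the result is a direct consequence of the usual single-degree exchange.
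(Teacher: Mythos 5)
The paper does not give its own proof of this statement---it is cited from Bandari--Qureshi---so there is nothing internal to compare against, but your argument is correct and self-contained. The (ii)$\Rightarrow$(i) direction is exactly the exchange property for $I_{\langle d\rangle}$, read off from (ii) applied to the pair $(v,u)$, after the (true) observation that every degree-$d$ monomial of $I$ is a minimal generator of $I_{\langle d\rangle}$. For (i)$\Rightarrow$(ii), the inflation $w=u\cdot x_k^{\deg(v)-\deg(u)}$ along a witness index $k$ for the failure of $u$ to divide $v$ (so $\deg_{x_k}(u)>\deg_{x_k}(v)$, hence $k\ne i$) is the right device: $w\in\mathcal{G}(I_{\langle\deg(v)\rangle})$, $\deg_{x_\ell}(w)=\deg_{x_\ell}(u)$ for $\ell\ne k$, and $\deg_{x_i}(v)>\deg_{x_i}(w)$, so the single-degree exchange applied to $(v,w)$ at $i$ produces a usable $j$; your case split ($j\ne k$ versus $j=k$) correctly transfers the inequality $\deg_{x_j}(v)<\deg_{x_j}(w)$ back to $\deg_{x_j}(v)<\deg_{x_j}(u)$. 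Two tiny hygiene remarks, neither of which affects correctness: $w\ne v$ (needed so the exchange is non-vacuous) follows at once from $\deg_{x_i}(v)>\deg_{x_i}(w)$, and when $\deg(u)=\deg(v)$ the inflation is trivial ($w=u$) and the argument reduces to the single-degree case as it should.
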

	
	\begin{Proposition}\label{Prop:BQ}
		\textup{\cite[Proposition 1.5]{BQ23}} Let $I\subset S$ be a componentwise polymatroidal ideal. Then the following property, called the dual exchange property, holds: For all $u,v\in I$ with $\deg(u)\le\deg(v)$, and all $i$ such that $\deg_{x_i}(v)<\deg_{x_i}(u)$ there exists an integer $j$ with $\deg_{x_j}(v)>\deg_{x_j}(u)$ and such that $x_i(v/x_j)\in I$.
	\end{Proposition}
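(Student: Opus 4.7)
The plan is to reduce the statement to the equal-degree case $\deg(u)=\deg(v)$ by a simple lifting argument, and then to invoke the symmetric (``dual'') exchange property of polymatroidal ideals, which is available in each single component $I_{\langle e\rangle}$.

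Setting $d=\deg(u)$ and $e=\deg(v)$, I would first handle the case $d=e$. Then $u$ and $v$ are two (minimal) generators of the polymatroidal ideal $I_{\langle e\rangle}$, and the desired conclusion is precisely the well-known symmetric exchange property for a single polymatroidal ideal (see, e.g., Herzog--Hibi, \emph{Monomial Ideals}, Chapter~12): given two generators of a polymatroidal ideal and any index $i$ where the first has strictly larger $x_i$-degree than the second, there is an index $j$ where the excess flips with \emph{both} one-step swaps again in the ideal, and in particular the swap of the second generator.

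For the main case $d<e$, I would lift $u$ to degree $e$: choose any monomial $m\in S$ of degree $e-d$ and set $u':=u\cdot m$. Then $u'\in I$ (because $u\in I$), $\deg(u')=e$ so $u'\in I_{\langle e\rangle}$, and $u\mid u'$ forces $\deg_{x_k}(u')\ge\deg_{x_k}(u)$ for every $k$. In particular $\deg_{x_i}(u')>\deg_{x_i}(v)$. Applying symmetric exchange in $I_{\langle e\rangle}$ to the pair $(u',v)$ at the index $i$ produces some $j$ with $\deg_{x_j}(v)>\deg_{x_j}(u')$ and $x_i(v/x_j)\in I_{\langle e\rangle}\subseteq I$. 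Since $\deg_{x_j}(u')\ge\deg_{x_j}(u)$, this $j$ also satisfies $\deg_{x_j}(v)>\deg_{x_j}(u)$, which is exactly the required conclusion.

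The only nontrivial ingredient is the symmetric exchange for a single polymatroidal ideal; granting it, the lifting step is formal. The obstacle one would face in trying to avoid symmetric exchange and argue directly from Theorem~\ref{Thm:BQ}(ii) is a directional mismatch: Theorem~\ref{Thm:BQ}(ii) produces an index of $v$-deficiency from a prescribed index of $v$-excess, whereas the dual exchange asks for the opposite. Pinning down the specific $j$ opposite to the prescribed $i$ is where a direct proof would get genuinely hard.
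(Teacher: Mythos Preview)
Your argument is correct. The paper itself does not prove this proposition; it merely quotes it from \cite[Proposition~1.5]{BQ23}, so there is no in-paper proof to compare against. Your reduction---lift $u$ to $u'=um$ with $\deg(m)=e-d$ so that $u',v\in\mathcal{G}(I_{\langle e\rangle})$, then apply the dual (symmetric) exchange property of the polymatroidal ideal $I_{\langle e\rangle}$, and finally observe that $\deg_{x_j}(v)>\deg_{x_j}(u')\ge\deg_{x_j}(u)$---is exactly the natural way to deduce the componentwise statement from the single-degree one, and it works as written. Two small remarks: (1) note that any monomial of degree $e$ in $I$ is automatically a \emph{minimal} generator of $I_{\langle e\rangle}$ since that ideal is generated in a single degree, so the hypothesis of the polymatroidal exchange is indeed met; (2) the dual exchange property for a polymatroidal ideal generated in one degree is equivalent to the ordinary exchange property (this is the symmetric basis exchange theorem for discrete polymatroids, see \cite{JT}), so your ``nontrivial ingredient'' is standard. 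Your closing observation about the directional mismatch with Theorem~\ref{Thm:BQ}(ii) is also on point: one really does need the dual form in each component, not merely the forward one.
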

	
	We close this section with some examples.
	\begin{Examples}\label{Ex:CP}
		\rm (a) Componentwise polymatroidal ideals in two variables were classified in \cite{BQ23}. Let $I\subset K[x,y]$ be a monomial ideal. We may assume that the minimal monomial generators of $I$ do not have any common factor. In fact, if $I=uJ$ for a monomial $u\in S$ and a monomial ideal $J$, then $I$ is componentwise polymatroidal if and only if $J$ is such. It is proved in \cite[Corollary 2.7]{BQ23} that $I\subset K[x,y]$ is a componentwise polymatroidal ideal if and only if $I$ is a \textit{$yx$-tight} ideal in the sense of \cite[Definition 2.1]{BQ23}.
		
		(b) Let ${\bf a}=(a_1,\dots,a_n)\in\ZZ_{\ge0}^n$ and $d\ge1$. The ideal of \textit{Veronese type} $({\bf a},d)$ is
		$$
		I_{{\bf a},d}\ =\ (x_1^{b_1}\cdots x_n^{b_n}\ :\ b_1+\dots+b_n=d,\ b_i\le a_i,\ \textup{for all}\ i).
		$$
		Monomial ideals whose all components are of Veronese type are componentwise polymatroidal ideals, see also \cite[Section 3]{BH2013}.\smallskip
		
		(c) A monomial ideal $I$ generated in a single degree has the strong exchange property if for all $u,v\in\mathcal{G}(I)$ all $i$ such that $\deg_{x_i}(u)>\deg_{x_i}(v)$ and all $j$ such that $\deg_{x_j}(u)<\deg_{x_j}(v)$, then $x_j(u/x_i)$ belongs to $\mathcal{G}(I)$. It is known that any such ideal $I$ is a polymatroidal ideal of the form $I=uI_{{\bf a},d}$ for some suitable monomial $u\in S$, ${\bf a}\in\ZZ_{\ge0}^n$ and $d\ge1$. Hence, ideals whose all components satisfy the strong exchange property are componentwise polymatroidal.\smallskip
		
		(d) Denote by $\mathfrak{m}$ the maximal ideal $(x_1,\dots,x_n)$. It is known that the product of polymatroidal ideals is polymatroidal. Let $1\le d_1<\dots<d_t$ be positive integers, $J_1,\dots,J_t$ be polymatroidal ideals generated in degrees $d_1,\dots,d_t$, respectively, such that $\mathfrak{m}^{d_{i+1}-d_i}J_i\subseteq J_{i+1}$ for $i=1,\dots,t-1$. Let $I=J_1+\dots+J_t$. Then $I$ is componentwise polymatroidal. Indeed,
		$$
		I_{\langle j\rangle}=\begin{cases}
			J_{i}&\textup{if}\ j=d_i,\ \textup{for some}\ i,\\
			\mathfrak{m}^{j-d_i}J_{i}&\textup{if}\ d_i<j<d_{i+1},\ \textup{for some}\ i,\\
			\mathfrak{m}^{j-d_t}J_{t}&\textup{if}\ j\ge d_t,
		\end{cases}
		$$
		is polymatroidal for all $j$.\smallskip
		
		(e) Let $u=x_{i_1}\cdots x_{i_d}$ and $v=x_{j_1}\cdots x_{j_d}$ be two monomials of the same degree $d$, with $1\le i_1\le\dots\le i_d\le n$ and $1\le j_1\le\dots\le j_d\le n$. We write $v\preceq_{\textup{Borel}}u$ if $j_k\le i_k$ for all $k$. The \textit{principal Borel ideal generated by $u$}, denoted by $B(u)$, is the monomial ideal generated in degree $d$ whose minimal generating set is
		$$
		\mathcal{G}(B(u))\ =\ \{v\in S\ :\ \deg(v)=\deg(u),\ v\preceq_{\textup{Borel}}u\}.
		$$ 
		It is known that $B(u)$ is polymatroidal. Let $u,v\in S$ be monomials of the same degree. It follows from the definition of $\succeq_{\textup{Borel}}$ that $B(v)\subseteq B(u)$ if and only if $v\preceq_{\textup{Borel}}u$. Notice that $\mathfrak{m}^\ell B(u)=B(ux_n^\ell)$ for any $\ell$. We say that a monomial ideal $I$ is \textit{componentwise principal Borel} if all $I_{\langle j\rangle}$ are principal Borel ideals. From (d) and these considerations, it follows that $I$ is componentwise principal Borel if and only if there exists monomials $u_1,\dots,u_t$ of degrees $d_1<\dots<d_t$, respectively, such that
		$$
		u_ix_n^{d_{i+1}-d_i}\ \preceq_{\textup{Borel}}\ u_{i+1},
		$$
		for $i=1,\dots,t-1$. In particular, in such a case $I=B(u_1)+\dots+B(u_t)$.\smallskip
		
		(f) Actually, componentwise polymatroidal ideals appeared implicitly for the first time in the work of Francisco and Van Tuyl \cite{FVT2007}, in connection to \textit{ideals of fat points}. For $n\ge1$, set $[n]=\{1,\dots,n\}$. Given a non-empty subset $A$ of $[n]$, denote by $P_A$ the polymatroidal ideal $(x_i:i\in A)$. Suppose that $A_1,\dots,A_t$ are non-empty subsets of $[n]$ such that $A_i\cup A_j=[n]$ for all $i\ne j$. It is shown in \cite[Theorem 3.1]{FVT2007} that $$I=P_{A_1}^{k_1}\cap\dots\cap P_{A_t}^{k_t}$$ is componentwise polymatroidal for all positive integers $k_1,\dots,k_t\ge1$.\smallskip
		
		(g) Let $I$ be a polymatroidal ideal generated in degree $d$. The \textit{socle} of $I$ is the monomial ideal $\textup{soc}(I)=(I:\mathfrak{m})_{\langle d-1\rangle}$. It is conjectured in \cite[page 760]{BH2013}, and proved in some special cases in \cite{F2}, that $\textup{soc}(I)$ is again polymatroidal. It is noted in \cite{F2} that $(I:\mathfrak{m})$ is generated in at most two degrees $d-1$ and $d$, and that $(I:\mathfrak{m})_{\langle d\rangle}=I$. Thus
		$$
		(I:\mathfrak{m})=\textup{soc}(I)+I.
		$$
		Furthermore, it follows by the very definition of colon ideal that $\mathfrak{m}(I:\mathfrak{m})\subseteq I$. In particular, $\mathfrak{m}\cdot\textup{soc}(I)\subseteq I$. Hence, if $\textup{soc}(I)$ is polymatroidal, it would follow by the construction in (d) that $(I:\mathfrak{m})$ is componentwise polymatroidal.\smallskip
		
		(h) More generally, let $I$ be a componentwise polymatroidal ideal. If the above conjecture about the socle of polymatroidal ideals is true, then $(I:\mathfrak{m})$ would be componentwise polymatroidal as well. Indeed,
		\begin{align*}
			(I:\mathfrak{m})_{\langle j\rangle}\ &=\ \{u\in S\ :\ \deg(u)=j,\ \textup{and}\ ux_i\in I,\ \textup{for all}\ i\}\\
			&=\ \{u\in S\ :\ \deg(u)=j,\ \textup{and}\ ux_i\in I_{\langle j+1\rangle},\ \textup{for all}\ i\}\\
			&=\ (I_{\langle j+1\rangle}:\mathfrak{m})_{\langle j\rangle}\\
			&=\ \textup{soc}(I_{\langle j+1\rangle})
		\end{align*}
	    would be a polymatroidal ideal, for all $j$.
	\end{Examples}

	\section{Componentwise polymatroidal ideals have linear quotients}
	
	We are now ready to prove the main result in the paper.
	
	\begin{Theorem}\label{Thm:CMLQ}
		Componentwise polymatroidal ideals have linear quotients.
	\end{Theorem}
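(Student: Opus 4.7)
The plan is to order $\mathcal{G}(I)=\{u_1,\ldots,u_m\}$ primarily by increasing degree, and within each degree $d$ by a lex-decreasing order on $\mathcal{G}(I_{\langle d\rangle})$ corresponding to a carefully chosen total order on the variables. To verify linear quotients it suffices, for each pair $k<j$, to exhibit a variable $x_i$ dividing $u_k/\gcd(u_k,u_j)$ such that $x_iu_j\in(u_1,\ldots,u_{j-1})$.

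The same-degree case $\deg u_k=\deg u_j=d$ is handled by applying the classical linear quotients of the polymatroidal ideal $I_{\langle d\rangle}$: the lex-decreasing order on $\mathcal{G}(I_{\langle d\rangle})$ produces a variable $x_i\mid u_k/\gcd(u_k,u_j)$ together with a lex-earlier generator $w\in\mathcal{G}(I_{\langle d\rangle})$ satisfying $x_iu_j\in(w)$. Any minimal generator $u_l\in\mathcal{G}(I)$ dividing $w$ has $\deg u_l\le d$: if $\deg u_l<d$ then $u_l$ precedes $u_j$ by the primary (degree) ordering, and if $\deg u_l=d$ then $u_l=w$ precedes $u_j$ within the degree-$d$ block. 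In either case $l<j$.

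For the different-degree case $\deg u_k<\deg u_j$, the minimality of $u_k$ forces $u_k\nmid u_j$, so some $i$ satisfies $\deg_{x_i}(u_k)>\deg_{x_i}(u_j)$. Applying the dual exchange property (Proposition~\ref{Prop:BQ}) to $(u_k,u_j)$ at index $i$ produces $j'$ with $\deg_{x_{j'}}(u_j)>\deg_{x_{j'}}(u_k)$ and $w:=x_iu_j/x_{j'}\in I$. Then $x_iu_j=x_{j'}w$ is divisible by any minimal generator $u_l\in\mathcal{G}(I)$ dividing $w$, with $\deg u_l\le\deg u_j$. If $\deg u_l<\deg u_j$ then $l<j$ holds automatically, and we are done.

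The hard part is the remaining subcase $\deg u_l=\deg u_j$, where $u_l=x_iu_j/x_{j'}$ is a same-degree generator that must be shown to precede $u_j$ in the within-degree lex; for the standard lex with $x_1>\cdots>x_n$ this reduces to $i<j'$, which is not guaranteed by the exchange. Small examples (already in three variables) show that no single fixed variable order in the within-degree lex can succeed universally: the correct priority must be chosen adaptively, intuitively giving higher priority to the variables carrying degree in the lowest-degree generators of $I$, so that for every cross-degree pair the indices $(i,j')$ produced by the exchange land in the desired inequality. The principal technical content will be to show that such an adaptive priority can be defined coherently across all cross-degree pairs simultaneously, using the rigid combinatorics of each polymatroidal $I_{\langle d\rangle}$ together with the property in Theorem~\ref{Thm:BQ}(ii) to rule out obstructing cycles among the resulting priority constraints; this is where I expect the main difficulty of the argument to lie.
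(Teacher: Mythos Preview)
Your proposal is not a complete proof: the crucial cross-degree subcase where the dual exchange produces $u_l=x_iu_j/x_{j'}\in\mathcal{G}(I)$ with $\deg u_l=\deg u_j$ is left unresolved. You correctly observe that no fixed variable order makes the needed inequality $i<j'$ automatic, and you propose to search for an ``adaptive priority'' ruling out obstructing cycles among the resulting constraints; but no construction of such a priority is given, and no argument is offered that the constraints are acyclic. As it stands this is a restatement of the difficulty rather than a resolution of it, and I do not see how to complete it along the lines you sketch: the exchange axioms give you \emph{some} $j'$, not control over which one, so forcing $i<j'$ simultaneously for all cross-degree pairs looks genuinely delicate.

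The paper's proof avoids this obstacle entirely by abandoning the degree-then-lex paradigm. Instead it argues by induction on the number of variables (and on $|\mathcal{G}(I)|$): after stripping a common factor, pick a variable $x_1$ dividing some generator of minimal degree and write $I=x_1I_1+I_2$, where $\mathcal{G}(x_1I_1)$ collects the generators divisible by $x_1$ and $\mathcal{G}(I_2)$ the rest. Three facts are checked: (a) $I_2\subseteq I_1$ (this is where the dual exchange property is used, applied once against a minimal-degree generator in $x_1I_1$); (b) $x_1I_1$ is componentwise polymatroidal in $S$; (c) $I_2$ is componentwise polymatroidal in $K[x_2,\dots,x_n]$. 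By induction each of $x_1I_1$ and $I_2$ has a linear quotients order, and concatenating them (first the $u$'s from $x_1I_1$, then the $v$'s from $I_2$) works because for $v_\ell\in\mathcal{G}(I_2)$ one has $(x_1I_1:v_\ell)=(x_1)$ thanks to (a). Thus the cross-block colon is always the single variable $x_1$, and the within-degree bookkeeping you were struggling with never arises.
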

	\begin{proof}
		Let $I\subset S=K[x_1,\dots,x_n]$ be a componentwise polymatroidal ideal. We prove the theorem by induction on $n$, the number of variables.
		
		For $n=1$, $I$ is a principal ideal and it has linear quotients.
		
		Let $n>1$. If $|\mathcal{G}(I)|=1$, $I$ is again a principal ideal. Suppose $|\mathcal{G}(I)|>1$. By induction, all componentwise polymatroidal ideals in $S$ with less than $|\mathcal{G}(I)|$ generators have linear quotients. Furthermore, we may suppose that all monomials $u\in \mathcal{G}(I)$ have no common factor $w\ne1$. Otherwise, we may consider the ideal $I'$ with $\mathcal{G}(I')=\{u/w:u\in \mathcal{G}(I)\}$. Then $I'$ is componentwise polymatroidal too, and $I$ has linear quotients if and only if $I'$ has linear quotients. Let $d=\alpha(I)$ be the \textit{initial degree} of $I$. That is, $I_{\langle j\rangle}=0$ for $0\le j<d$ and $I_{\langle d\rangle}\ne0$. Let $j$ any integer such that $x_j$ divides some monomial generator of $I_{\langle d\rangle}$. After a suitable relabeling, we may assume $j=1$. Therefore, we can write
		$$
		I=x_1I_1+I_2
		$$
		for unique monomial ideals $I_1,I_2\subset S$ such that
		\begin{align*}
		\mathcal{G}(x_1I_1)\ &=\ \{u\in \mathcal{G}(I)\ :\ x_1\ \textup{divides}\ u\},\\
		\mathcal{G}(I_2)\ &=\ \{u\in \mathcal{G}(I)\ :\ x_1\ \textup{does not divide}\ u\}.
		\end{align*}
		
		We are going to prove the following three facts:
		\begin{enumerate}
		\item[(a)] $I_2\subseteq I_1$ as monomial ideals of $S$.
		\item[(b)] $x_1I_1$ is a componentwise polymatroidal ideal of $S$.
		\item[(c)] $I_2$ is a componentwise polymatroidal ideal of $K[x_2,\dots,x_n]$.
		\end{enumerate}
		
		Once we get these claims, the proof ends as follows. Since the monomials in $\mathcal{G}(I)$ have no common factor $\ne1$, $|\mathcal{G}(x_1I_1)|$ and $|\mathcal{G}(I_2)|$ are strictly less than $|\mathcal{G}(I)|$. Items (b) and (c) together with our induction hypothesis imply that $x_1I_1$ and $I_2$ have linear quotients, with linear quotients orders, say $u_1,\dots,u_r$ of $\mathcal{G}(x_1I_1)$, and $v_1,\dots,v_s$ of $\mathcal{G}(I_2)$. We claim $u_1,\dots,u_r,v_1,\dots,v_s$ is a linear quotients order of $I$. Indeed, if $\ell\in[r]$, then $(u_1,\dots,u_{\ell-1}):u_\ell$ is generated by variables by our inductive hypothesis on $x_1I_1$. Whereas, if $\ell\in[s]$, using the inductive hypothesis on $I_2$, we obtain that the ideal
		\begin{align*}
		(u_1,\dots,u_r,v_1,\dots,v_{\ell-1}):v_{\ell}&=(u_1,\dots,u_r):v_\ell+(v_1,\dots,v_{\ell-1}):v_{\ell}\\
		&=(x_1I_1:v_\ell)+(v_1,\dots,v_{\ell-1}):v_\ell\\
		&=(x_1)+(v_1,\dots,v_{\ell-1}):v_\ell
		\end{align*}
		is generated by variables, because it is a sum of ideals generated by variables. Here, we have used the fact that $v_\ell\in \mathcal{G}(I_2)\subset I_1$ and $x_1$ does not divide $v_\ell$ to get the equality $(x_1I_1:v_\ell)=x_1(I_1:v_\ell)=x_1S=(x_1)$.
		\medskip
		
		It remains to prove items (a), (b) and (c).
		\medskip
		
		\textit{Proof of} (a): It is enough to show that any monomial of $\mathcal{G}(I_2)$ is divided by some monomial of $I_1$. Let $v\in \mathcal{G}(I_2)$ and let $u\in x_1I_1$ with $\deg(u)=\alpha(I)$. Then $\deg(u)=\alpha(x_1I_1)=\alpha(I)$. Therefore $\deg(u)\le\deg(v)$. Moreover $\deg_{x_1}(v)=0<\deg_{x_1}(u)$. By the dual exchange property (Proposition \ref{Prop:BQ}) we can find $j$ with $\deg_{x_j}(v)>\deg_{x_j}(u)$ such that $x_1(v/x_j)\in I$. Then there is $w\in \mathcal{G}(I)$ that divides $x_1(v/x_j)$. If $w\in \mathcal{G}(I_2)$, then $x_1$ does not divide $w$ and so $w$ divides $v/x_j$, against the fact that $v$ is a minimal generator of $I$. Hence $w\in \mathcal{G}(x_1I_1)$ and $w=x_1w'$ divides $x_1(v/x_j)$. Consequently $w'\in I_1$ divides $v/x_j$. Hence $w'\in I_1$ divides $v\in \mathcal{G}(I_2)$, as desired. 
		\medskip
		
		\textit{Proof of} (b): Let $u,v\in x_1I_1$ with $\deg(u)\le\deg(v)$, $u$ not diving $v$, and let $i$ such that $\deg_{x_i}(v)>\deg_{x_i}(u)$. By Theorem \ref{Thm:BQ}(ii) it is enough to determine $j$ with $\deg_{x_j}(v)<\deg_{x_j}(u)$ such that $x_j(v/x_i)\in x_1I_1$. Since $u,v\in I$, by Theorem \ref{Thm:BQ} we can find $j$ with $\deg_{x_j}(v)<\deg_{x_j}(u)$ such that $x_j(v/x_i)\in I$. We show now that $x_j(v/x_i)\in x_1I_1$. Note that $x_1$ divides $v\in x_1I_1$. If $i\ne1$, then $x_1$ divides $x_j(v/x_i)$. Otherwise, if $i=1$, since $x_1$ divides $u\in x_1I_1$ and $\deg_{x_1}(v)>\deg_{x_1}(u)\ge1$, we obtain $\deg_{x_1}(x_j(v/x_1))\ge1$. Hence, in both cases $x_1$ divides $x_j(v/x_i)$. Now, if some $w\in \mathcal{G}(I_2)$ divides $x_j(v/x_i)$ then $x_1w$ also divides $x_j(v/x_i)$. By item (a), $x_1w\in x_1I_2\subset x_1I_1$ and so $x_j(v/x_i)\in x_1I_1$. Otherwise, some $w\in \mathcal{G}(x_1I_1)$ divides $x_j(v/x_i)$ and again $x_j(v/x_i)\in x_1I_1$, as wanted.
		\medskip
		
		\textit{Proof of} (c): Let $u,v\in I_2$ with $\deg(u)\le\deg(v)$, $u$ not diving $v$ and let $i$ such that $\deg_{x_i}(v)>\deg_{x_i}(u)$. Recall that we are regarding $I_2$ as an ideal of $K[x_2,\dots,x_n]$, hence $\deg_{x_1}(v)=\deg_{x_1}(u)=0$. By Theorem \ref{Thm:BQ}(ii) valid in $I$, there exists $j$ with $\deg_{x_j}(v)<\deg_{x_j}(u)$ and such that $x_j(v/x_i)\in I$. Since $j\ne 1$, $x_1$ does not divide $x_j(v/x_i)$. Hence $x_j(v/x_i)\in I_2$, as desired.
	\end{proof}
	\begin{Example}
		\rm By Examples \ref{Ex:CP}(f), $I=P_{\{1,2,3\}}^2\cap P_{\{1,3,4\}}^2$ is componentwise polymatroidal. Notice that $\mathcal{G}(I)=\{x_1^2,x_1x_3,x_3^2,x_1x_2x_4,x_2x_3x_4,x_2^2x_4^2\}$ and $\alpha(I)=2$. A variable dividing a generator of least degree is for instance $x_1$. Using the notation in the proof of Theorem \ref{Thm:CMLQ} and the \textit{Macaulay2} \cite{GDS} package \cite{FPack1}, we checked that $I_1=(x_1,x_3,x_2x_4)$, $I_2=(x_3^2,x_2x_3x_4,x_2^2x_4^2)$ are componentwise polymatroidal ideals and $I_2\subseteq I_1$. The ideal $I_1$ has linear quotients order $x_1,x_3,x_2x_4$. Whereas a linear quotients order of $I_2$ is $x_3^2,x_2x_3x_4,x_2^2x_4^2$. Hence, according to the proof of the theorem, a linear quotients order of $I=x_1I_1+I_2$ is indeed $x_1^2,x_1x_3,x_1x_2x_4,x_3^2,x_2x_3x_4,x_2^2x_4^2$.
	\end{Example}
	
	Unfortunately the product of componentwise polymatroidal ideals is not a componentwise polymatroidal ideal anymore \cite{BH2013}. However, we expect that
	\begin{Conjecture}
		Each power of a componentwise polymatroidal ideal has linear quotients.
	\end{Conjecture}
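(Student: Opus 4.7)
The natural strategy is to strengthen the conjecture, showing that every power $I^k$ of a componentwise polymatroidal ideal $I$ is itself componentwise polymatroidal, and then invoking Theorem \ref{Thm:CMLQ} to conclude that $I^k$ has linear quotients. Although the excerpt notes that the product of two distinct componentwise polymatroidal ideals need not be componentwise polymatroidal, powers are special because both factors share the same structure, and small examples (e.g.\ $I = (x_1^2, x_1x_2, x_2^3)$) confirm that at least $I^2$ enjoys the property.

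To execute this plan I would verify the exchange characterization of Theorem \ref{Thm:BQ}(ii) for $I^k$. Given $u, v \in I^k$ with $\deg(u) \le \deg(v)$, $u \nmid v$, and $i$ with $\deg_{x_i}(v) > \deg_{x_i}(u)$, pick factorizations $u = u_1 \cdots u_k$ and $v = v_1 \cdots v_k$ with $u_\ell, v_\ell \in \mathcal{G}(I)$. Since $\sum_\ell \deg_{x_i}(v_\ell) > \sum_\ell \deg_{x_i}(u_\ell)$, pigeonhole furnishes an index $\ell$ with $\deg_{x_i}(v_\ell) > \deg_{x_i}(u_\ell)$, and applying Theorem \ref{Thm:BQ}(ii) to the pair $(u_\ell, v_\ell)$ inside $I$ (after possibly swapping roles via the dual exchange of Proposition \ref{Prop:BQ} when $\deg(u_\ell) > \deg(v_\ell)$) yields a variable $x_j$ with $x_j(v_\ell/x_i) \in I$, hence $x_j(v/x_i) \in I^k$. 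The main obstacle is promoting the factor-level strict inequality $\deg_{x_j}(v_\ell) < \deg_{x_j}(u_\ell)$ to the global inequality $\deg_{x_j}(v) < \deg_{x_j}(u)$ required by the characterization; my approach is to choose factorizations of $u$ and $v$ that minimize a discrepancy invariant, e.g.\ $\sum_\ell |\deg_{x_\ell}(u) - \deg_{x_\ell}(v)|$, and when the naive $j$ fails the global inequality, use the dual exchange property to re-factor $v$ (or $u$) so the invariant strictly decreases, iterating until a suitable $j$ appears.

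If the strengthening turns out to fail in general, a fallback is to mimic the proof of Theorem \ref{Thm:CMLQ} via double induction on $(n,k)$. The decomposition $I = x_1 I_1 + I_2$ yields $I^k = x_1 I_1 I^{k-1} + I_2^k$; the piece $I_2^k$ has linear quotients by induction on $n$ applied to the componentwise polymatroidal ideal $I_2 \subset K[x_2,\dots,x_n]$, and for any $v \in \mathcal{G}(I_2^k)$ one has $v \in I_2 \subseteq I_1 \subseteq I_1 I^{k-1}$, so $(x_1 I_1 I^{k-1}) : v = (x_1)$, which allows a linear quotients order of $x_1 I_1 I^{k-1}$ to be concatenated with one of $I_2^k$ exactly as in the combining step of Theorem \ref{Thm:CMLQ}. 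The outstanding difficulty in this fallback is extracting a linear quotients order for $x_1 I_1 I^{k-1}$, whose factors $I_1$ and $I^{k-1}$ are componentwise polymatroidal but whose product is not in general, requiring a further inductive or structural analysis of this product.
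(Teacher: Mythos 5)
This statement is posed in the paper as an open conjecture: the paper offers no proof of it, and explicitly frames it as something the author merely \emph{expects} to hold, precisely because the componentwise polymatroidal property is not closed under products. There is therefore no ``paper proof'' to compare against, and your proposal does not constitute a proof either --- it is a pair of strategy sketches with gaps, most of which you candidly flag yourself.

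A few of those gaps deserve to be named more precisely. In your first approach, after the pigeonhole step you apply Theorem \ref{Thm:BQ}(ii) to the pair $(u_\ell,v_\ell)$, but that requires $\deg(u_\ell)\le\deg(v_\ell)$, and when this fails your fix of ``swapping roles via Proposition \ref{Prop:BQ}'' does not deliver what you need: the dual exchange applied to $(v_\ell,u_\ell)$ produces a $j$ with $x_i(u_\ell/x_j)\in I$, not $x_j(v_\ell/x_i)\in I$, so the factor-level exchange step already breaks down before you even reach the harder issue of promoting $\deg_{x_j}(v_\ell)<\deg_{x_j}(u_\ell)$ to the global inequality $\deg_{x_j}(v)<\deg_{x_j}(u)$. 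The proposed fix for that latter issue --- re-factoring to decrease a discrepancy invariant --- has no argument for termination or monotonicity, so it is a hope, not a step. In the fallback approach, the decomposition $I^k=x_1I_1I^{k-1}+I_2^k$ is indeed valid, and the colon computation $(x_1I_1I^{k-1}):v=(x_1)$ for $v\in\mathcal{G}(I_2^k)$ goes through (via $I_2^k\subseteq I_1I^{k-1}$, using $I_2\subseteq I_1$ and $I_2\subseteq I$; note your stated chain $I_1\subseteq I_1I^{k-1}$ is false but is not the inclusion you actually need). The unresolved piece, as you say, is producing a linear-quotients order on $x_1I_1I^{k-1}$: this ideal is a product of two componentwise polymatroidal ideals, which the paper points out need not be componentwise polymatroidal, so neither Theorem \ref{Thm:CMLQ} nor an obvious induction applies. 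In short, your second route is the more structurally promising, but both remain incomplete, consistent with the conjecture's open status.
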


    For a monomial ideal $I$, denote by $\HS_j(I)$ the $j$th \textit{homological shift ideal} of $I$ \cite{F2}. That is, the monomial ideal generated by the monomials whose exponent vectors are the $j$th multigraded shifts appearing in the minimal multigraded free resolution of $I$. It is expected that $\HS_j(I)$ is polymatroidal for all $j$, if $I$ is polymatroidal. For some partial results on this conjecture see \cite{Bay019,F2,FH2023}.
    \begin{Question}
    	Let $I$ be a componentwise polymatroidal ideal. Is $\HS_j(I)$ componentwise polymatroidal as well, for all $j$?
    \end{Question}
	
	\section{Componentwise Discrete Polymatroids}
	
	In this final section, we introduce the combinatorial counterpart of componentwise polymatroidal ideals, which we call \textit{componentwise discrete polymatroids}.\medskip
	
	For ${\bf a}=(a_1,\dots,a_n)\in\ZZ_{\ge0}^n$, denote by ${\bf a}[i]=a_i$ the $i$th component of ${\bf a}$. We set $|{\bf a}|=a_1+\dots+a_n$. Let ${\bf a},{\bf b}\in\ZZ_{\ge0}^n$. We write ${\bf a}\le{\bf b}$ if ${\bf a}[i]\le{\bf b}[i]$ for all $i$. We write ${\bf a}<{\bf b}$ if ${\bf a}\le{\bf b}$ and ${\bf a}\ne{\bf b}$. Let ${\bf e}_1,\dots,{\bf e}_n$ be the canonical basis of $\ZZ_{\ge0}^n$, that is ${\bf e}_i[j]=0$ for all $j\ne i$ and ${\bf e}_i[i]=1$. A \textit{simplicial multicomplex} $\mathcal{M}$ on the vertex set $[n]$ is a finite subset of $\ZZ_{\ge0}^n$ satisfying the following properties:
	\begin{enumerate}
		\item[(a)] If ${\bf a}\in\mathcal{M}$ and ${\bf b}\le{\bf a}$, then ${\bf b}\in\mathcal{M}$.
		\item[(b)] ${\bf e}_i\in\mathcal{M}$ for all $i$.
	\end{enumerate}
    Any ${\bf a}\in\mathcal{M}$ is called a \textit{face} of $\mathcal{M}$. A \textit{facet} ${\bf a}\in\mathcal{M}$ is a face of $\mathcal{M}$ for which there is no ${\bf b}\in\mathcal{M}$ such that ${\bf a}<{\bf b}$. The set of facets of $\mathcal{M}$ is denoted by $\mathcal{F}(\mathcal{M})$. We set $\alpha(\mathcal{M})=\min\{|{\bf a}|:{\bf a}\in\mathcal{F}(\mathcal{M})\}$ and $\omega(\mathcal{M})=\max\{|{\bf a}|:{\bf a}\in\mathcal{F}(\mathcal{M})\}$. The dimension of $\mathcal{M}$ is $\dim(\mathcal{M})=\max\{|{\bf a}|-1:{\bf a}\in\mathcal{M}\}$. Notice that $\dim(\mathcal{M})=\omega(\mathcal{M})-1$.
    
    For any ${\bf b}_1,\dots,{\bf b}_\ell\in\ZZ_{\ge0}^n$, we denote by $\langle{\bf b}_1,\dots,{\bf b}_\ell\rangle$ the unique, smallest with respect to the inclusion, simplicial multicomplex containing ${\bf b}_1,\dots,{\bf b}_\ell$.

    For ${\bf a}\in\ZZ_{\ge0}$, we set ${\bf x^a}=\prod_ix_i^{{\bf a}[i]}$. The \textit{facet ideal} of $\mathcal{M}$ is defined as
    $$
    I(\mathcal{M})\ =\ ({\bf x^a}\ :\ {\bf a}\in\mathcal{F}(\mathcal{M})).
    $$
    
    There is a natural bijection between monomial ideals of $S$ and simplicial multicomplexes on vertex set $[n]$, defined by assigning to each monomial ideal $I\subset S$ the simplicial multicomplex $\mathcal{M}_I=\langle{\bf a}\in\ZZ_{\ge0}^n:{\bf x^a}\in\mathcal{G}(I)\rangle$.\smallskip
    
    Now, we introduce a special class of simplicial multicomplexes. A simplicial multicomplex $\mathcal{P}$ is called a \textit{componentwise discrete polymatroid} if $I(\mathcal{P})$ is a componentwise polymatroidal ideal. To adhere to the classical terminology used for discrete polymatroids, we call the facets of $\mathcal{P}$ the \textit{bases} of $\mathcal{P}$. Notice that a componentwise discrete polymatroid is a discrete polymatroid if and only if $\alpha(\mathcal{P})=\omega(\mathcal{P})$.
    
    We denote by $[n]^{\langle d\rangle}$ the discrete polymatroid $\{{\bf a}\in\ZZ_{\ge0}^n:|{\bf a}|\le d\}$. In particular $[n]^{\langle1\rangle}=\{{\bf e}_1,\dots,{\bf e}_n\}$. Whereas, given a non-empty finite set $A\subset\ZZ_{\ge0}^n$ and an integer $j\ge0$, we set $A_{\langle j\rangle}=\{{\bf a}\in A:|{\bf a}|\le j\}$. Furthermore, if $A_1,A_2\subset\ZZ_{\ge0}^n$ are non-empty finite sets, we define the sum as $A_1+A_2=\{{\bf a}_1+{\bf a}_2\ :\ {\bf a}_1\in A_1,{\bf a}_2\in A_2\}$.
    
    Now, we can characterize componentwise discrete polymatroids.
    \begin{Theorem}
    	The following conditions are equivalent:
    	\begin{enumerate}
    		\item[\textup{(i)}] $\mathcal{P}$ is a componentwise discrete polymatroid.
    		\item[\textup{(ii)}] For all $\alpha(\mathcal{P})\le j\le\omega(\mathcal{P})$, the simplicial multicomplex
    		$$
    		\bigcup_{k=\alpha(\mathcal{P})}^j(\mathcal{P}_{\langle k\rangle}+[n]^{\langle j-k\rangle})
    		$$
    		is a discrete polymatroid.
    		\item[\textup{(iii)}] For all ${\bf a},{\bf b}\in\bigcup_{\ell=\alpha(\mathcal{P})}^{\omega(\mathcal{P})}\bigcup_{k=\alpha(\mathcal{P})}^\ell(\mathcal{P}_{\langle k\rangle}+[n]^{\langle \ell-k\rangle})$ with $\alpha(\mathcal{P})\le|{\bf a}|\le|{\bf b}|$ and ${\bf a}\not\le{\bf b}$, and all $i$ such that ${\bf b}[i]>{\bf a}[i]$, there is an integer $j$ with ${\bf b}[j]<{\bf a}[j]$ such that ${\bf b}-{\bf e}_i+{\bf e}_j\in\bigcup_{\ell=\alpha(\mathcal{P})}^{\omega(\mathcal{P})}\bigcup_{k=\alpha(\mathcal{P})}^\ell(\mathcal{P}_{\langle k\rangle}+[n]^{\langle \ell-k\rangle})$.
    	\end{enumerate}
    \end{Theorem}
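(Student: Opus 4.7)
The plan is to identify, for each $j\in[\alpha(\mathcal{P}),\omega(\mathcal{P})]$, the union
\[
U_j \,=\, \bigcup_{k=\alpha(\mathcal{P})}^j \bigl(\mathcal{P}_{\langle k\rangle}+[n]^{\langle j-k\rangle}\bigr)
\]
with the simplicial multicomplex $\mathcal{M}_{I(\mathcal{P})_{\langle j\rangle}}$ associated to the $j$-th graded component of $I(\mathcal{P})$, and then to translate the three conditions into the ideal-theoretic characterization provided by Theorem~\ref{Thm:BQ}.

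First, I would check that each Minkowski sum $\mathcal{P}_{\langle k\rangle}+[n]^{\langle j-k\rangle}$ is a simplicial multicomplex. Given $\mathbf{c}'\le\mathbf{a}+\mathbf{d}$ in such a sum, a coordinate-by-coordinate splitting produces $\mathbf{c}'=\mathbf{a}'+\mathbf{d}'$ with $\mathbf{a}'\le\mathbf{a}$ and $\mathbf{d}'\le\mathbf{d}$, so $\mathbf{a}'\in\mathcal{P}_{\langle k\rangle}$ and $\mathbf{d}'\in[n]^{\langle j-k\rangle}$; hence $U_j$ is a multicomplex. For the inclusion $\mathcal{M}_{I(\mathcal{P})_{\langle j\rangle}}\subseteq U_j$: every generator of $I(\mathcal{P})_{\langle j\rangle}$ factors as $\mathbf{a}+\mathbf{d}$ with $\mathbf{a}\in\mathcal{F}(\mathcal{P})$ of some degree $k\in[\alpha(\mathcal{P}),j]$ and $|\mathbf{d}|=j-k$, hence lies in the summand indexed by that $k$. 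For the reverse inclusion $U_j\subseteq\mathcal{M}_{I(\mathcal{P})_{\langle j\rangle}}$: given $\mathbf{c}=\mathbf{a}+\mathbf{d}$ in a summand, one dominates $\mathbf{c}$ by $\mathbf{a}^*+\mathbf{d}'$, where $\mathbf{a}^*\in\mathcal{F}(\mathcal{P})$ is a facet with $\mathbf{a}\le\mathbf{a}^*$ and $|\mathbf{a}^*|\le j$, and $\mathbf{d}'$ is obtained from $\mathbf{d}$ by padding with multiples of $\mathbf{e}_1$ so that $|\mathbf{a}^*+\mathbf{d}'|=j$; this places $\mathbf{c}$ below a generator of $I(\mathcal{P})_{\langle j\rangle}$.

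Once $U_j=\mathcal{M}_{I(\mathcal{P})_{\langle j\rangle}}$ is established, (i)$\iff$(ii) is a direct translation: $U_j$ is a discrete polymatroid precisely when $I(\mathcal{P})_{\langle j\rangle}$ is polymatroidal, and $\mathcal{P}$ is a componentwise discrete polymatroid exactly when this holds for all $j$ in the relevant range. For (ii)$\iff$(iii), the nesting $\mathcal{M}_{I(\mathcal{P})_{\langle\ell\rangle}}\subseteq \mathcal{M}_{I(\mathcal{P})_{\langle\ell+1\rangle}}$ (coming from $\mathfrak{m}\cdot I(\mathcal{P})_{\langle\ell\rangle}\subseteq I(\mathcal{P})_{\langle\ell+1\rangle}$) shows that the grand union $\bigcup_\ell U_\ell$ equals $\mathcal{M}_{I(\mathcal{P})_{\langle\omega(\mathcal{P})\rangle}}$, and condition (iii) becomes a reformulation of the exchange axiom of Theorem~\ref{Thm:BQ}(ii) for $I(\mathcal{P})$ applied jointly across all pairs of degrees; the equivalence with polymatroidality of every component then follows directly from Theorem~\ref{Thm:BQ}.

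The main obstacle will be the inclusion $U_j\subseteq\mathcal{M}_{I(\mathcal{P})_{\langle j\rangle}}$: one must guarantee that a face $\mathbf{a}\in\mathcal{P}_{\langle k\rangle}$ is really dominated by a facet of $\mathcal{P}$ of degree at most $j$. This relies on the range constraint $\alpha(\mathcal{P})\le k\le j\le\omega(\mathcal{P})$ together with the dual exchange property from Proposition~\ref{Prop:BQ}, which lets one descend from a high-degree facet dominating $\mathbf{a}$ to one of degree at most $j$ that still sits above $\mathbf{a}$.
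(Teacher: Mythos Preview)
Your overall route coincides with the paper's: identify $U_j$ with $\mathcal{M}_{I(\mathcal{P})_{\langle j\rangle}}$ (equivalently $I(U_j)=I(\mathcal{P})_{\langle j\rangle}$), then read off (i)$\Leftrightarrow$(ii) directly, derive (i)$\Rightarrow$(iii) from Theorem~\ref{Thm:BQ}, and get (iii)$\Rightarrow$(ii) from the classical exchange characterization of discrete polymatroids (the paper cites \cite[Theorem~2.3]{JT} for this last step, which you should also invoke rather than routing everything through Theorem~\ref{Thm:BQ}). Where the paper simply declares the identification ``easily seen'', you are right to isolate the inclusion $U_j\subseteq\mathcal{M}_{I(\mathcal{P})_{\langle j\rangle}}$ as the nontrivial one.

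There is, however, a genuine gap in how you propose to close it. First, invoking Proposition~\ref{Prop:BQ} is circular for the directions (ii)$\Rightarrow$(i) and (iii)$\Rightarrow$(ii): that proposition \emph{assumes} $I(\mathcal{P})$ is componentwise polymatroidal, i.e.\ assumes (i). Since your write-up uses the identification $U_j=\mathcal{M}_{I(\mathcal{P})_{\langle j\rangle}}$ for all three equivalences, you cannot rely on Proposition~\ref{Prop:BQ} to establish it once and for all. Second, the dual exchange property does not ``descend'' in degree as you claim: both exchanges in Theorem~\ref{Thm:BQ}(ii) and Proposition~\ref{Prop:BQ} replace a monomial by one of the \emph{same} degree, so they cannot take a facet of $\mathcal{P}$ of modulus $>j$ to one of modulus $\le j$ still dominating the given face $\mathbf{a}$. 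A concrete witness that some hypothesis is really needed: if $\mathcal{F}(\mathcal{P})=\{(2,0),(0,3)\}$ and $j=2$, then $(0,2)\in\mathcal{P}_{\langle 2\rangle}=U_2$ but $y^2\notin I(\mathcal{P})_{\langle 2\rangle}=(x^2)$, so $U_2\not\subseteq\mathcal{M}_{I(\mathcal{P})_{\langle 2\rangle}}$. You should therefore either prove the identification only under hypothesis (i) (which suffices for (i)$\Rightarrow$(ii) and (i)$\Rightarrow$(iii)) and give a separate argument for the reverse implications that does not presuppose it, or argue directly that (ii) already forces every modulus-$j$ facet of $U_j$ to dominate a facet of $\mathcal{P}$ of modulus at most $j$.
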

    \begin{proof}
    	We first notice the following fact. Let $I\subset S$ be a monomial ideal, and let $\omega(I)=\max\{\deg(u):u\in\mathcal{G}(I)\}$. Then $I$ is componentwise polymatroidal if and only if $I_{\langle j\rangle}$ is polymatroidal for $\alpha(I)\le j\le\omega(I)$. Only sufficiency needs a proof. Suppose that $I_{\langle j\rangle}$ is polymatroidal for $\alpha(I)\le j\le\omega(I)$. If $j>\omega(I)$, then $I_{\langle j\rangle}=\mathfrak{m}^{j-\omega(I)}I_{\langle\omega(I)\rangle}$ is polymatroidal for it is the product of two polymatroidal ideals.
    	
    	It is easily seen that $I(\mathcal{P})_{\langle j\rangle}=I(\bigcup_{k=\alpha(\mathcal{P})}^j(\mathcal{P}_{\langle k\rangle}+[n]^{\langle j-k\rangle}))$ for all $\alpha(\mathcal{P})\le j\le\omega(\mathcal{P})$. Since, by definition, $I(\mathcal{P})$ is componentwise polymatroidal if and only if $I(\mathcal{P})_{\langle j\rangle}$ is polymatroidal for all $\alpha(\mathcal{P})\le j\le\omega(\mathcal{P})$, the equivalence (i)$\Leftrightarrow$(ii) follows at once.
    	
    	The implication (i)$\Rightarrow$(iii) follows from Theorem \ref{Thm:BQ}. Conversely, assume that (iii) holds. Then, \cite[Theorem 2.3]{JT} implies that $I(\mathcal{P})_{\langle j\rangle}$ is polymatroidal for all $\alpha(\mathcal{P})\le j\le\omega(\mathcal{P})$. This shows that (iii)$\Rightarrow$(ii) and concludes the proof.
    \end{proof}
	
	A simplicial multicomplex $\mathcal{M}$ is called \textit{pure} if $|{\bf a}|=|{\bf b}|$ for all ${\bf a},{\bf b}\in\mathcal{F}(\mathcal{M})$. Whereas, $\mathcal{M}$ is called \textit{shellable} if there exists an order ${\bf a}_1,\dots,{\bf a}_m$ of $\mathcal{F}(\mathcal{M})$ such that the simplicial multicomplex $$\langle{\bf a}_1,\dots,{\bf a}_{j-1}\rangle\cap\langle{\bf a}_j\rangle$$ is pure of dimension $|{\bf a}_j|-1$ for all $j=2,\dots,m$. In this case, ${\bf a}_1,\dots,{\bf a}_m$ is called a \textit{shelling order} of $\mathcal{M}$. It is well-known and easily seen that ${\bf a}_1,\dots,{\bf a}_m$ is a shelling order of $\mathcal{M}$ if and only if ${\bf x}^{{\bf a}_1},\dots,{\bf x}^{{\bf a}_m}$ is a linear quotients order of $I(\mathcal{M})$. Thus, Theorem \ref{Thm:CMLQ} implies immediately
	\begin{Corollary}
		Componentwise discrete polymatroids are shellable.
	\end{Corollary}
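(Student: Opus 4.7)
The plan is to reduce the shellability statement directly to the main theorem of the paper via the dictionary between facet ideals and simplicial multicomplexes. Concretely, let $\mathcal{P}$ be a componentwise discrete polymatroid. By the definition introduced just above the corollary, this means exactly that the facet ideal $I(\mathcal{P})$ is a componentwise polymatroidal ideal of $S$.

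Next, I would invoke Theorem \ref{Thm:CMLQ}, which asserts that every componentwise polymatroidal ideal has linear quotients. Applying this to $I(\mathcal{P})$ produces an ordering $\mathbf{x}^{\mathbf{a}_1},\dots,\mathbf{x}^{\mathbf{a}_m}$ of the minimal monomial generators of $I(\mathcal{P})$ such that the colon ideal $(\mathbf{x}^{\mathbf{a}_1},\dots,\mathbf{x}^{\mathbf{a}_{j-1}}):\mathbf{x}^{\mathbf{a}_j}$ is generated by variables for each $j=2,\dots,m$. Since $\mathcal{G}(I(\mathcal{P}))$ is in natural bijection with $\mathcal{F}(\mathcal{P})$ via $\mathbf{x}^{\mathbf{a}}\leftrightarrow\mathbf{a}$, this ordering pulls back to an ordering $\mathbf{a}_1,\dots,\mathbf{a}_m$ of the facets (bases) of $\mathcal{P}$.

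Finally, I would apply the equivalence stated immediately before the corollary: an ordering $\mathbf{a}_1,\dots,\mathbf{a}_m$ of $\mathcal{F}(\mathcal{M})$ is a shelling order of a simplicial multicomplex $\mathcal{M}$ if and only if the corresponding ordering $\mathbf{x}^{\mathbf{a}_1},\dots,\mathbf{x}^{\mathbf{a}_m}$ is a linear quotients order of $I(\mathcal{M})$. Thus the order produced above is a shelling of $\mathcal{P}$, and the corollary follows.

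Since all genuine work is encapsulated in Theorem \ref{Thm:CMLQ} and in the translation lemma relating linear quotients to shellability, I do not expect any real obstacle: the proof is essentially a one-line deduction. The only thing to be mildly careful about is making sure the bijection between facets and minimal monomial generators is used in both directions, so that the linear quotients order and the shelling order correspond literally; this is immediate from the definition of $I(\mathcal{P})$.
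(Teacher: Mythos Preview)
Your proposal is correct and follows exactly the paper's approach: the corollary is stated there as an immediate consequence of Theorem~\ref{Thm:CMLQ} via the equivalence between shelling orders of $\mathcal{M}$ and linear quotients orders of $I(\mathcal{M})$. You have simply spelled out the one-line deduction in full detail, and there is nothing to add or correct.
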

	
	We end the paper with some natural questions.
	
	Let $\mathcal{P}$ be a componentwise discrete polymatroid. Attached to $\mathcal{P}$ there are the following three monomial subalgebras of $S[t]$:
	\begin{align*}
		K[\mathcal{P}]\ &=\ K[{\bf x^a}t\ :\ {\bf a}\in\mathcal{P}],\\[3pt]
		K[\mathcal{F}(\mathcal{P})]\ &=\ K[{\bf x^a}t\ :\ {\bf a}\in\mathcal{F}(\mathcal{P})],\\
		\mathcal{R}(I(\mathcal{P}))\ &=\ \bigoplus_{k\ge0}I(\mathcal{P})^kt^k\ =\ K[x_1,\dots,x_n,{\bf x^a}t\ :\ {\bf a}\in\mathcal{F}(\mathcal{P})].
	\end{align*}
    We call $K[\mathcal{F}(\mathcal{P})]$ the \textit{base ring} of $\mathcal{P}$. Whereas, $\mathcal{R}(I(\mathcal{P}))$ is the Rees algebra of $I(\mathcal{P})$. These three algebras are toric rings. It follows from a famous theorem of Hochster that if a toric ring is normal, then it is Cohen--Macaulay \cite{Hoc72}.
	
	\begin{Question}
		\rm Let $\mathcal{P}$ be a componentwise discrete polymatroid. Are the rings $K[\mathcal{P}],K[\mathcal{F}(\mathcal{P})],\mathcal{R}(I(\mathcal{P}))$ normal? Cohen--Macaulay?
	\end{Question}

    The above question has a positive answer when $\mathcal{P}$ is actually a discrete polymatroid, see \cite[Theorem 6.1]{JT}, \cite[Corollary 6.2]{JT} and \cite[Proposition 3.11]{VRees}.
    
    On the other hand, the following question is open even for discrete polymatroids.
	
	\begin{Question}
		\rm Let $\mathcal{P}$ be a componentwise discrete polymatroid. Are the rings $K[\mathcal{P}],K[\mathcal{F}(\mathcal{P})],\mathcal{R}(I(\mathcal{P}))$ Koszul?
	\end{Question}

\end{document}